\theoremstyle{plain}
\newtheorem{thm}{Theorem}[section]
\newtheorem{lem}[thm]{Lemma}
\newtheorem{cor}[thm]{Corollary}
\newtheorem{prop}[thm]{Proposition}
\newtheorem{defn}[thm]{Definition}
\newtheorem{pro}[thm]{Problem}
\theoremstyle{definition}
\newtheorem{rem}[thm]{Remark}
\begin{document}

\title{A dense subset of  $M_{n}(\mathbb{R})$ containing diagonalizable matrices}
\date{}
\author{Flavien Mabilat$^{a}$}
\address{$^{a}$,
Laboratoire de Math\'{e}matiques de Reims,
UMR9008 CNRS et Universit\'{e} de Reims Champagne-Ardenne, 
U.F.R. Sciences Exactes et Naturelles 
Moulin de la Housse - BP 1039 
51687 Reims cedex 2,
France}
\email{flavien.mabilat@univ-reims.fr}

\maketitle

\begin{abstract}

In this note, we consider matrices similar to $X$-form matrices, which are the matrices for which only the diagonal and the anti-diagonal elements can be different from zero. First, we give a characterization of these matrices using the minimal polynomial. Then,  we prove that the set of matrices similar to $X$-form matrices over $\mathbb{R}$ and $\mathbb{C}$ are dense and we give a characterization of the interior of this set.
\\

\end{abstract}
\thispagestyle{empty}

\noindent \textbf{\underline{Keywords :}} $X$-form matrix, minimal polynomial, Jordan normal form, companion matrix. 
\\
\\Declarations of interest: none
\\
\\
\begin{flushright}
 \textit{``La m\'{e}moire est aussi menteuse que l'imagination, et bien 
\\plus dangereuse avec ses petits airs studieux.''} 
\\ Fran\c{c}oise Sagan, \textit{Derri\`{e}re l'\'{e}paule}
\end{flushright}

\section{Introduction}

In this note, all fields considered are commutative. Let $\mathbb{K}$ be an arbitrary commutative field. The set of all square matrices of size $n$ over $\mathbb{K}$ is denoted $M_{n}(\mathbb{K})$, $0_{n}$ is the zero matrix of $M_{n}(\mathbb{K})$. If $A \in M_{n}(\mathbb{K})$ we denote $\pi_{A}(X)$ the minimal polynomial of $A$ (with the convention $\pi_{A}$ monic polynomial) and $\chi_{A}(X)={\rm det}(XI_{n}-A)$ the characteristic polynomial of $A$ (with this definition $\chi_{A}(X)$ is a monic polynomial). We use the convention $\prod_{i=1}^{0} a_{i} =1$. An elementary Jordan matrix is a matrix composed of zeroes everywhere except for the diagonal, which is filled with a fixed element $\lambda \in \mathbb{K}$, and for the superdiagonal, which is composed of ones. The Frobenius companion matrix of the monic polynomial $P(X)=X^{n}+\sum_{i=0}^{n-1} a_{i}X^{i}$ is the square matrix defined as $C(P)=\begin{pmatrix}
            0 & \cdots & 0 & -a_{0} \\
					  1 &  \ddots & \vdots & -a_{1} \\
						  & \ddots & 0 & \vdots\\
						  &        & 1 & -a_{d-1} \\
							\end{pmatrix}.$
\\ 
\\ \indent A classical results states that the set of diagonalizable matrices is dense over $\mathbb{C}$ but not over $\mathbb{R}$. Here, we want to find a subset of $M_{n}(\mathbb{K})$ containing all diagonalizables matrices which is dense in the cases $\mathbb{K}=\mathbb{R}, \mathbb{C}$ and we want to find the interior of his set. For this, we will study the objects introduced in the following definitions :

\begin{defn}
\label{def}

i) A $X$-form matrix is a square matrix for which only the diagonal and the anti-diagonal elements can be different from zero.
\\ii) An endomorphism $u$ of a finite dimensional vector space $E$ is $X$-formable if there is a basis of $E$ with respect to which the matrix of $u$ is a $X$-form matrix.
\\iii) A $X$-formable matrix is a matrix similar to a $X$-form matrix.

\end{defn}

\noindent First, we give some of the easy properties verifying by that kind of matrices :
\begin{itemize}
\item Matrices of size 1 or 2 are $X$-form matrices;
\item $X$-form matrices of odd size has an eigenvalue belonging to $\mathbb{K}$;
\item If $A$ is a $X$-form matrix then $A^{t}$ is a $X$-form matrix;
\item If $A$ is diagonalizable then $A$ is $X$-formable;
\item The set of $X$-form matrices of size $n$ is a vector subspace of $M_{n}(\mathbb{K})$;
\item If $A$ and $B$ are $X$-form matrices then $AB$ is a $X$-form matrix;
\item If $A$ is an invertible $X$-form matrix then $A^{-1}$ is a $X$-form matrix (this follows from the last two points and the equality $A^{-1}=\frac{-1}{a_{0}}(A^{r-1}+\sum_{i=1}^{r-1}a_{i} A^{i-1})$ with $\pi_{A}(X)=X^{r}+\sum_{i=0}^{r-1}a_{i} X^{i}$, $a_{0} \neq 0$ since $A$ is invertible).
\\
\end{itemize}

We also have some other properties in the case of central-symmetric $X$-form matrices (see \cite{ANO,S}) and also in the case of block central-symmetric $X$-form matrices (see \cite{Sa}). 
\\
\\ \indent A classical result states that a square matrix is diagonalizable if and only if its minimal polynomial is a product of distinct linear factors over $\mathbb{K}$. It is natural to find a similar characterization in the case of $X$-formable matrices. This seems to be not very difficult but we have been unable to locate such a result in the litterature. Hence, our first objective is to prove the following result :

\begin{thm}
\label{MaThm}

Let $\mathbb{K}$ be a commutative field and $A \in M_{n}(\mathbb{K})$. $A$ is $X$-formable if and only if \[\pi_{A}(X)=\prod_{i=1}^{r} P_{i}(X) \prod_{i=1}^{q} (X-\lambda_{i})^{n_{i}},\] with $r, q \geq 0$, $P_{i}$ irreducible monic polynomials of degree 2, $\lambda_{i} \in \mathbb{K}$, $1 \leq n_{i} \leq 2$, $P_{i} \neq P_{j}$ and $\lambda_{i} \neq \lambda_{j}$ for $i \neq j$.

\end{thm}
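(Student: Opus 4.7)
The plan rests on one structural observation: an $X$-form matrix $A$ of size $n$ leaves invariant each subspace $V_i = \mathrm{span}(e_i, e_{n+1-i})$, because the $i$-th column of $A$ has all of its possibly nonzero entries in rows $i$ and $n+1-i$. When $n$ is even, these $V_i$ give $n/2$ two-dimensional invariant summands on which the restriction of $A$ is an arbitrary $2 \times 2$ matrix; when $n$ is odd, the middle vector spans an additional one-dimensional summand. Conversely, starting from any block-diagonal matrix with blocks of size $1$ and $2$, I first pair up the $1 \times 1$ blocks into diagonal $2 \times 2$ blocks (their total number has the same parity as $n$, so at most one remains in the odd case), and then apply the permutation of the basis sending the pairs $\{e_{2j-1}, e_{2j}\}$ to the pairs $\{e_j, e_{n+1-j}\}$ to obtain an $X$-form matrix. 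This reduces Theorem \ref{MaThm} to the statement: \emph{$A$ is similar to a direct sum of blocks of size at most $2$ if and only if $\pi_A$ has the form described in the theorem.}

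For the direct implication, write $A \sim \bigoplus_i B_i$ with $\deg \pi_{B_i} \leq 2$, so that $\pi_A = \mathrm{lcm}_i \pi_{B_i}$. Irreducible factors of $\pi_A$ are irreducible factors of some $\pi_{B_i}$, and so have degree at most $2$. A degree-two irreducible $P$ can divide some $\pi_{B_i}$ only if $\pi_{B_i} = P$, hence $P$ appears with exponent at most $1$ in the lcm; a linear factor $X - \lambda$ has exponent at most $2$ in any $\pi_{B_i}$, and therefore at most $2$ in $\pi_A$. This is exactly the shape stated in the theorem.

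For the converse, apply the elementary divisor decomposition of $\mathbb{K}^n$, viewed as a $\mathbb{K}[X]$-module via $A$. Each elementary divisor is a prime power dividing $\pi_A$; under the hypothesis, the only such prime powers are $X - \lambda_j$, $(X - \lambda_j)^2$, and the $P_i$. Each corresponding cyclic summand thus has dimension equal to the degree of its elementary divisor, namely $1$ or $2$, and the action of $A$ on it is the companion matrix of that divisor. Hence $A$ is similar to a direct sum of blocks of size at most $2$, and by the reduction above $A$ is $X$-formable.

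The main technical point is the structural equivalence of the first paragraph: in particular, the parity bookkeeping for $1 \times 1$ blocks and the explicit permutation needed to reassemble a block-diagonal matrix into an $X$-form matrix. Once that is secured, both implications follow cleanly from the $\mathbb{K}[X]$-module structure theorem and elementary manipulations with lcm's of minimal polynomials.
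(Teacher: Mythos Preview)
Your proof is correct. The reduction in your first paragraph is exactly Lemma~\ref{21} of the paper, and your forward implication via the lcm of the block minimal polynomials matches the paper's argument.

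For the converse you take a cleaner route than the paper. You appeal once to the elementary divisor form of the $\mathbb{K}[X]$-module decomposition: each elementary divisor is a prime power dividing $\pi_A$, hence of degree at most $2$, so every cyclic summand has dimension $1$ or $2$ and you are done. The paper instead argues by induction on $n$: it uses the primary decomposition (the ``kernel lemma'') to peel off one irreducible factor of $\pi_A$ at a time, and then treats the single-prime cases separately, invoking Jordan normal form when $\pi_A=(X-\lambda)^2$ and the Frobenius (invariant factor) decomposition when $\pi_A$ is an irreducible quadratic. Your approach unifies all of these cases in a single stroke and avoids the induction; the paper's route has the minor advantage of staying within the more commonly cited canonical-form theorems rather than requiring the elementary divisor version of the structure theorem.
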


To simplify the proof, we will use the following notation : a polynomial verifies the property $(\mathcal{P})$ if it has the same factorization as in the previous theorem.
\\
\\ \indent Then, we will consider some topological properties of $X$-formable matrices over $\mathbb{R}$ and $\mathbb{C}$ related to our initial goal. The main results of this text gathered in the following theorem give an answer to this problem :

\begin{thm}
\label{MaThmBis}

$n \in \mathbb{N}^{*}$. Let $F$ be the set of $X$-formable matrices of size $n$ over $\mathbb{C}$ and $G$ the set of $X$-formable matrices of size $n$ over $\mathbb{R}$. 
\\ i) $F$ is dense in $M_{n}(\mathbb{C})$.
\\ ii) The interior of $F$ is $\{A \in M_{n}(\mathbb{C}),~\chi_{A}~{\rm verifies}~(\mathcal{P})\}=\{A \in M_{n}(\mathbb{C}),~\chi_{A}~{\rm has~only~simple~or~double~roots}\}$.
\\ iii) $G$ is dense in $M_{n}(\mathbb{R})$.
\\ iv) The interior of $G$ is $\{A \in M_{n}(\mathbb{R}),~\chi_{A}~{\rm verifies}~(\mathcal{P})\}$.

\end{thm}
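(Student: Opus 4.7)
The plan is to treat parts (i) and (iii) by direct density arguments, and to prove (ii) and (iv) via a double inclusion for the interior, using Theorem \ref{MaThm} throughout to convert $X$-formability into the property $(\mathcal{P})$ on the minimal polynomial.

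For (i), diagonalizable matrices are already dense in $M_n(\mathbb{C})$ and are $X$-formable by the elementary properties listed after Definition \ref{def}, so $F$ is dense. For (iii), diagonalizability is not dense over $\mathbb{R}$, so instead I would use the set of real matrices whose characteristic polynomial has $n$ distinct complex roots: this set is the complement of the zero locus of the discriminant of $\chi_A$, a nonzero polynomial in the entries of $A$, hence a dense open subset of $M_n(\mathbb{R})$. For such an $A$, $\chi_A$ factors over $\mathbb{R}$ into a product of distinct monic irreducibles of degree $1$ or $2$, and since all complex roots are simple we have $\pi_A = \chi_A$, which verifies $(\mathcal{P})$; thus $A \in G$.

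For (ii) and (iv), set $S = \{A : \chi_A\ \text{verifies}\ (\mathcal{P})\}$. The inclusion $S \subseteq \operatorname{int}(F)$ (resp. $\operatorname{int}(G)$) rests on two observations. First, $S$ is open: verifying $(\mathcal{P})$ is equivalent to requiring that no complex root of $\chi_A$ has multiplicity at least $3$ (the real version is equivalent because complex conjugate roots carry the same multiplicity), and this is an open condition since for $B$ close to $A$ the roots of $\chi_B$ cluster near those of $\chi_A$ while the total multiplicity near each limit root is preserved. Second, if $A \in S$ then $\pi_A$ divides $\chi_A$ with the same set of irreducible factors and smaller or equal exponents, so $\pi_A$ also verifies $(\mathcal{P})$ and Theorem \ref{MaThm} gives $A \in F$ (resp. $G$).

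The main difficulty lies in the reverse inclusion: from $\chi_A$ that fails $(\mathcal{P})$, one must build non-$X$-formable matrices converging to $A$. The idea is to localize on the generalized eigenspace of a complex root $\mu$ of $\chi_A$ with multiplicity $m \geq 3$. I would decompose $A = A_V \oplus A_W$ with $V$ the $m$-dimensional generalized $\mu$-eigenspace and $W$ a complementary invariant subspace, and then perturb $A_V$ into a matrix having a single Jordan block of size $m$ at some $\mu' \approx \mu$, by adding an $\epsilon$-multiple of an elementary matrix that links the existing Jordan chains into one cyclic chain; the model case of Jordan type $2+1$ at $\mu$ is handled by adding $\epsilon$ in the bottom-left entry of the $3\times 3$ block, which one checks yields $\pi = (X-\mu)^3$. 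The resulting perturbation $B_\epsilon$ of $A$ has a minimal polynomial divisible by $(X-\mu')^m$ with $m \geq 3$, hence fails $(\mathcal{P})$ and $B_\epsilon \notin F$. For part (iv) with $\mu \notin \mathbb{R}$, the perturbation must be real, so I would perform the analogous construction on the real $2m$-dimensional subspace associated to $\{\mu, \bar\mu\}$, using $2 \times 2$ blocks of the form $\epsilon D$ with $D$ a suitable rotation in place of the scalar entries; the resulting real matrix acquires a complex Jordan block of size $m$ at some $\mu' \approx \mu$, and the corresponding factor of the minimal polynomial is $P_{\mu'}^m$ with $P_{\mu'}$ irreducible quadratic and $m \geq 3$. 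The technical point to check is that the elementary-matrix linking perturbation really produces a cyclic nilpotent, in particular in the real variant where the Jordan blocks carry internal rotational structure.
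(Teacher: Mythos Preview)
Your arguments for (i) and (ii) are correct and essentially match the paper's; the paper also splits into the Jordan-type subcases $2+2$, $1+1+1$, $2+1$ and writes down explicit $\tfrac{1}{n}$-perturbations, while you describe the general ``chain-linking'' idea and check only the model case, but this is fine. For (iii) your discriminant argument is a genuine simplification over the paper, which instead passes through the Frobenius decomposition and perturbs the roots of each invariant factor; your route is shorter and yields directly a dense open subset of $G$.

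There is, however, a real gap in your treatment of (iv). You assert that over $\mathbb{R}$ property $(\mathcal{P})$ is \emph{equivalent} to ``no complex root of multiplicity $\geq 3$'', justifying this by conjugate symmetry. That is false: $(\mathcal{P})$ requires each irreducible quadratic factor to appear with exponent exactly one, so a real polynomial such as $(X^{2}+1)^{2}$ fails $(\mathcal{P})$ even though every complex root has multiplicity $2$. This error propagates in two places. First, your openness argument for $S$ over $\mathbb{R}$ is incomplete: you must also rule out the possibility that a perturbation acquires a non-real root of multiplicity $2$ (the paper's Proposition~\ref{33} handles this by noting that such a root forces its conjugate to lie in the same disc, contradicting the multiplicity bound). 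Second, and more seriously, your reverse inclusion only treats roots $\mu$ with multiplicity $m\geq 3$; you never construct the approximating non-$X$-formable sequence when $\chi_{A}$ fails $(\mathcal{P})$ because some irreducible quadratic $P$ satisfies $P^{2}\mid\chi_{A}$. The paper treats this as a separate case (d): for $A\in G$ one has two $2\times 2$ blocks with characteristic polynomial $P$, and adding a single off-block entry $\tfrac{1}{n}$ produces a $4\times 4$ block $Z_{n}$ with $\pi_{Z_{n}}=P^{2}$, hence $A_{n}\notin G$. Your rotation-block construction, as described, aims for an exponent $m\geq 3$ on $P_{\mu'}$, which both overshoots and leaves the critical case $m=2$ unaddressed.
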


\section{$X$-formable matrices}

\noindent In this subpart, $\mathbb{K}$ is an arbitrary commutative field. We begin by an easy lemma.

\begin{lem}
\label{21}

A matrix $A$ is $X$-formable if and only if it is similar to a block-diagonal matrix in which each block has a size less than 2.

\end{lem}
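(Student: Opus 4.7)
The plan is to prove both directions by explicit basis permutations. (As I read the statement, ``size less than $2$'' must mean ``size at most $2$'', i.e. $1$ or $2$, since every $2\times 2$ matrix is already of $X$-form and, without this, the lemma would force the matrix to be diagonalizable and thus contradict the list of properties given just before.)

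For the forward implication, I would fix an $X$-form matrix $B$ similar to $A$ and exploit the visible block structure of $B$ itself. For each $i\le \lfloor n/2\rfloor$ the subspace $V_i := \mathrm{span}(e_i, e_{n+1-i})$ is $B$-stable, since the only nonzero entries of $B$ in columns $i$ and $n+1-i$ lie in rows $i$ and $n+1-i$; when $n$ is odd, $\mathrm{span}(e_{(n+1)/2})$ is additionally $B$-stable. Thus $\mathbb{K}^n=\bigoplus_i V_i$, and by reordering the standard basis so that each $V_i$ occupies consecutive positions, $B$ becomes block-diagonal with all blocks of size $\le 2$. Since $A$ is similar to $B$, $A$ is then similar to the same block-diagonal matrix.

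For the converse, suppose $A$ is similar to $D=\mathrm{diag}(B_1,\dots,B_k)$ with each $B_j$ of size $1$ or $2$. I would construct a permutation matrix $P$ so that $PDP^{-1}$ is of $X$-form. The slots in an $X$-form matrix of size $n$ are the pairs $\{i,n+1-i\}$ for $1\le i\le\lfloor n/2\rfloor$, together with the singleton $\{(n+1)/2\}$ when $n$ is odd. A $2\times 2$ block must be placed into a single pair-slot, whereas two $1\times 1$ blocks can share a pair-slot (each contributing a diagonal entry), or a single $1\times 1$ block can take the middle singleton. The packing succeeds by a parity check: writing $k$ for the number of $2\times 2$ blocks and $m$ for the number of $1\times 1$ blocks, the relation $2k+m=n$ forces $m\equiv n \pmod 2$, which is exactly the condition needed for the $1\times 1$ blocks to pair up, with exactly one left for the middle slot if and only if $n$ is odd.

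The only genuine obstacle is the bookkeeping in the converse: writing the permutation cleanly and checking that, after conjugation by $P$, every nonzero entry of $D$ lands on the diagonal or the anti-diagonal of $PDP^{-1}$. No deeper structural argument is required beyond this parity observation.
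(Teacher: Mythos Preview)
Your proposal is correct and follows essentially the same route as the paper: both directions are handled by an explicit permutation of the basis, pairing position $i$ with position $n+1-i$. The paper simply writes down the reordered basis $(e_1,e_{2m},e_2,e_{2m-1},\ldots)$ for the forward direction and $(f_1,f_3,\ldots,f_4,f_2)$ for the converse, treating the odd case as analogous; your invariant-subspace phrasing and parity check make explicit the packing of $1\times 1$ blocks that the paper's permutation tacitly assumes (after a harmless reordering of blocks).
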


\begin{proof}

First, we consider the case $n=2m$. Let $A$ be a $X$-formable matrix of size $n$. Let $u$ be an endomorphism of $\mathbb{K}^{n}$ whose matrix in the canonical basis is $A$. It exists $\mathcal{B}=(e_{1},\ldots,e_{2m})$ a basis of $\mathbb{K}^{n}$ such that the matrix of $u$ in $\mathcal{B}$ is a $X$-form matrix. The matrix of $u$ in the basis $(e_{1},e_{2m},e_{2},e_{2m-1},\ldots,e_{m},e_{m+1})$ is a block-diagonal matrix in which each block has a size less than 2. Let $A$ be a matrix similar to a block-diagonal matrix in which each block has a size less than 2 and $u$ be an endomorphism of $\mathbb{K}^{n}$ whose matrix in the canonical basis is $A$. It exists $\mathcal{B'}=(f_{1},\ldots,f_{2m})$ a basis of $\mathbb{K}^{n}$ such that the matrix of $u$ in $\mathcal{B'}$ is a block-diagonal matrix in which each block has a size less than 2. The matrix of $u$ in the basis $(f_{1},f_{3},\ldots,f_{4},f_{2})$ is a $X$-form matrix. The proof of the case $n=2m+1$ is similar.

\end{proof}

Note that this result, combined with the normal matrices reduction theorem (see \cite{G} Theorem 4, p 271), allows us to see that normal matrices on $\mathbb{R}$ are $X$-formable.

\subsection{Proof of theorem \ref{MaThm}}

\begin{proof}

Let $A$ be a $X$-formable matrix of size $n$. By the previous lemma, it exists a basis of $\mathbb{K}^{n}$ such that the matrix of $u$ in this basis is a block-diagonal matrix $C$ in which each block has a size less than 2. Thus, $\pi_{A}$ is the least common multiple of the minimal polynomials of the diagonal blocks of C. The minimal polynomials of matrix of size 1 or 2 are one of the following type :
\begin{itemize}
\item irreducible monic polynomial of degree 2;
\item $(X-\lambda)^{2}$;
\item $(X-\lambda)$;
\item $(X-\lambda_{1})(X-\lambda_{2})$, $\lambda_{1} \neq \lambda_{2}$.
\end{itemize}

\noindent Hence, they verify $(\mathcal{P})$. Thus, $\pi_{A}$ verifies $(\mathcal{P})$.
\\	
\\Now, we want to prove the remaining implication of the theorem.
\\
\\Firt, we consider a matrix $A$ of size $n$ such that $\pi_{A}$ verifies $(\mathcal{P})$ and such that $\pi_{A}$ has only one irreducible factor (over $\mathbb{K}$). We have three cases :

\begin{itemize}
\item $\pi_{A}=(X-\lambda)$  then $A$ is diagonalizable, and so $X$-formable. 
\item $\pi_{A}=(X-\lambda)^{2}$. By the decomposition theorem of Jordan, $A$ is similar to a block diagonal matrix $H=\begin{pmatrix}
   J_{1} &        &   \\
         & \ddots &    \\
		     &        & J_{l}  \\
\end{pmatrix}$ in which $J_{i}$ is an elementary Jordan matrix. Besides, each $J_{i}$ is a square matrix of size $n_{i} \in \{1, 2\}$. Indeed, $\pi_{A}=\pi_{H}={\rm lcm}(\pi_{J_{i}}, 1 \leq i \leq l)$ and $\pi_{J_{i}}=(X-\lambda)^{n_{i}}$. Hence, $A$ is $X$-formable, by lemma \ref{21}.
\item $\pi_{A}$ is an irreducible monic polynomial of degree 2. By the decomposition theorem of Frobenius, $A$ is similar to a block diagonal matrix \[H=\begin{pmatrix}
   C(R_{1}) &        &   \\
         & \ddots &    \\
		     &        & C(R_{t})  \\
\end{pmatrix},\] with $R_{i}$ monic polynomials verifying $R_{i}$ divides $R_{i+1}$. Since $\pi_{A}=\pi_{H}={\rm lcm}(R_{i}, 1 \leq i \leq t)$ and $\pi_{A}$ irreducible, $R_{i}=\pi_{A}$. Thus, $H$ is block-diagonal matrix in which each block has size 2 and $A$ is $X$-formable, by lemma \ref{21}.
\\
\end{itemize}

\noindent Now, we proceed by induction on the size $n$ of the matrix. If $n=1$, then the result is true. Suppose it exists $n \geq 1$ such that all matrices of size less than $n$ whose minimal polynomial verifies $(\mathcal{P})$ are $X$-formable. Let $A$ be a matrix of size $n+1$ whose minimal polynomial verifies $(\mathcal{P})$ and $u$ the endomorphism canoniquely associated to $A$. If $\pi_{A}$ has only one irreducible factor, then the result is true by the previous discussion. Suppose $\pi_{A}$ has several irreducible factors. It exists a  monic polynomial $P$ such that $\pi_{A}=P\frac{\pi_{A}}{P}$, $P$ has degree greater or equal to 1, $P$ and $\frac{\pi_{A}}{P}$ are relatively prime polynomials. $P$ and $\frac{\pi_{A}}{P}$ verify $(\mathcal{P})$. Consider $F={\rm Ker}(P(u))$ and $G={\rm Ker}(\frac{\pi_{A}}{P}(u))$. Hence, by the kernel lemma\footnote{This translation of the French name "lemme des noyaux" seems to be the most used name for this result.}, $\mathbb{K}^{n+1}=F \oplus G$ and $F$, $G$ are invariant subspaces of $u$. 
\\
\\By induction assumption, $u_{|G}$ (restriction of $u$ to $G$) and $u_{|F}$ are $X$-formable. Hence, $u$ is $X$-formable and theorem~\ref{MaThm} is proved.

\end{proof}

\medskip

\subsection{Some additional elements about theorem \ref{MaThm}}

\leavevmode\par 
\leavevmode\par \noindent In the case of an algebraically closed field we have the following result :

\begin{cor}
\label{22}

Let $\mathbb{K}$ be an algebraically closed field and $A \in M_{n}(\mathbb{K})$. The following assertions are equivalent :
\\i) $A$ is $X$-formable.
\\ii) $\pi_{A}(X)=\prod_{i=1}^{q} (X-\lambda_{i})^{n_{i}}$ with $q \geq 1$, $\lambda_{i} \in \mathbb{K}$, $1 \leq n_{i} \leq 2$ and $\lambda_{i} \neq \lambda_{j}$ for $i \neq j$.
\\iii) All the Jordan blocks appearing in the Jordan normal form of $A$ have their size equal to 1 or 2.

\end{cor}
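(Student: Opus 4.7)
The plan is to deduce this corollary directly from Theorem~\ref{MaThm} together with the standard link between the minimal polynomial and the Jordan normal form, exploiting the hypothesis that $\mathbb{K}$ is algebraically closed.

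First I would observe that over an algebraically closed field there are no irreducible monic polynomials of degree $2$. Consequently, if $A\in M_n(\mathbb{K})$ satisfies property $(\mathcal{P})$, then the product $\prod_{i=1}^{r}P_i(X)$ in the statement of Theorem~\ref{MaThm} must be empty (i.e.\ $r=0$), so the factorization collapses to $\pi_A(X)=\prod_{i=1}^{q}(X-\lambda_i)^{n_i}$ with $1\le n_i\le 2$ and the $\lambda_i$ pairwise distinct. The equivalence (i)~$\Leftrightarrow$~(ii) is then nothing but Theorem~\ref{MaThm} specialized to this situation; note that the condition $q\ge 1$ is harmless since $\pi_A$ is nonconstant.

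For the equivalence (ii)~$\Leftrightarrow$~(iii), I would invoke the classical description of the minimal polynomial in terms of the Jordan normal form: if the distinct eigenvalues of $A$ are $\mu_1,\ldots,\mu_s$ and if $m_j$ is the maximal size of a Jordan block associated to $\mu_j$, then $\pi_A(X)=\prod_{j=1}^{s}(X-\mu_j)^{m_j}$. Hence $\pi_A$ has the form in (ii) (distinct linear factors raised to exponents in $\{1,2\}$) if and only if every $m_j$ lies in $\{1,2\}$, which in turn means that no Jordan block of $A$ has size exceeding $2$, i.e.\ (iii).

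No real obstacle is expected here: the whole argument is a routine specialization. The only point one has to be slightly careful about is making the translation between (ii) and (iii) at the level of every Jordan block (not just the largest one for each eigenvalue), but this is immediate since blocks of size $\le 2$ at an eigenvalue $\mu_j$ are exactly those of size $\le m_j\le 2$.
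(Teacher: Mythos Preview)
Your proposal is correct and is exactly the argument the paper intends: the corollary is stated without proof as an immediate specialization of Theorem~\ref{MaThm} to an algebraically closed field, together with the standard fact that the exponent of $(X-\lambda_i)$ in $\pi_A$ equals the largest Jordan block size at $\lambda_i$. There is nothing to add.
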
 

\noindent If $A$ is a $X$-formable matrix then there is not a unique $X$-form matrix similar to $A$. For example, $A=\begin{pmatrix}
    1 & 1  \\[4pt]
    0 & 2   \\
 \end{pmatrix}$ is a $X$-form matrix but $A$ is also similar to $\begin{pmatrix}
    1 & 0  \\[4pt]
    0 & 2   \\
 \end{pmatrix}$ which is also a $X$-form matrix.
\\
\\We now give the two following examples :
\begin{itemize}
\item We consider $\mathbb{K}=\mathbb{R}$ and $B=\begin{pmatrix}
    1 & 1 & 1 \\
    0 & 1 & 1  \\
		 0 & 0 & 1 \\
   \end{pmatrix}$. $\pi_{B}(X)=(X-1)^{3}$. Hence, by the theorem \ref{MaThm}, $B$ is not $X$-formable.
\\

\item We consider $\mathbb{K}=\mathbb{R}$ and $C=\begin{pmatrix}
    -22 & 47 & -19 & 18 \\
    1 & 3 & -3 & -5 \\
		 14 & -23 & 7 & -16\\
		-15 & 27 & -9 & 17 \\
   \end{pmatrix}$. $\pi_{C}(X)=(X-2)(X-3)(X^{2}+2)$. Hence, by the theorem \ref{MaThm}, $C$ is $X$-formable. For instance,
	\[C=\begin{pmatrix}
    1 & 2 & 3 & 4 \\
    0 & 2 & 1 & 2 \\
		 0 & 3 & 0 & 1\\
		1 & 0 & 1 & 1 \\
   \end{pmatrix} \begin{pmatrix}
    4 & 0 & 0 & 1 \\
    0 & 1 & 1 & 0 \\
		 0 & -3 & -1 & 0\\
		-2 & 0 & 0 & 1 \\
   \end{pmatrix} \begin{pmatrix}
    -1 & 1 & 0 & 2 \\
    1 & -2 & 1 & -1 \\
		 4 & -7 & 2 & -4\\
		-3 & 6 & -2 & 3 \\
   \end{pmatrix}.\]
	
\end{itemize}

We conclude this part by giving some elements about the product of $X$-formable matrices. We have the following result :

\begin{thm}[Botha, \cite {B} Theorem 2.1]

Let $\mathbb{K}$ be any field such that the characteristic of $\mathbb{K}$ is different from 2 and such that $\mathbb{K}\neq \mathbb{F}_{3}$ (the field with 3 elements). Then every matrix over $\mathbb{K}$ is a product of two diagonalizable matrices.

\end{thm}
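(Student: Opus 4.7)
The plan is to reduce to a structurally simple case via canonical forms and then exhibit explicit diagonalizable factors. The property of being a product of two diagonalizable matrices is preserved under similarity and under block direct sums: if $A_{i} = D_{1,i} D_{2,i}$ with each $D_{j,i}$ diagonalizable, then $\bigoplus_{i} A_{i} = \bigl(\bigoplus_{i} D_{1,i}\bigr)\bigl(\bigoplus_{i} D_{2,i}\bigr)$ is again such a product. Combined with the rational canonical form (Frobenius decomposition), this reduces the problem to showing that a single companion matrix $A = C(P)$ associated with $P(X) = X^{n} + \sum_{i=0}^{n-1} a_{i} X^{i}$ admits a factorization $C(P) = D_{1} D_{2}$ with both factors diagonalizable over $\mathbb{K}$.

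For the companion matrix case, I would try to prescribe the structure of $D_{1}$ so as to control $D_{2} = D_{1}^{-1} C(P)$. A natural first attempt is $D_{1} = \mathrm{diag}(\mu_{1}, \ldots, \mu_{n})$ with $n$ distinct nonzero scalars $\mu_{i} \in \mathbb{K}$, which is automatically diagonalizable; the task then becomes choosing the $\mu_{i}$ so that $D_{2}$ has minimal polynomial splitting into distinct linear factors over $\mathbb{K}$. Since $D_{2}$ must be diagonalizable over $\mathbb{K}$ itself, not just over an algebraic closure, one cannot simply appeal to a generic argument; instead, a parameter-counting strategy is needed, producing $\mu_{i}$ and prescribed eigenvalues $\nu_{i} \in \mathbb{K}$ for $D_{2}$ that are consistent with $D_{1} D_{2} = C(P)$. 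The hypotheses $\mathrm{char}(\mathbb{K}) \neq 2$ and $\mathbb{K} \neq \mathbb{F}_{3}$ guarantee $|\mathbb{K}| \geq 4$, providing enough distinct scalars to carry this out.

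An alternative route is to separate the invertible and singular cases. In the invertible case ($a_{0} \neq 0$), one can write $C(P) = J \cdot (J C(P))$, where $J$ is the antidiagonal permutation matrix, which is an involution and hence diagonalizable since $\mathrm{char}(\mathbb{K}) \neq 2$; one then analyzes the minimal polynomial of the Hankel-like matrix $J C(P)$ directly. The singular case ($a_{0} = 0$) is handled by isolating the kernel of $C(P)$ and applying the invertible argument to a complementary block, again using $\mathrm{char}(\mathbb{K}) \neq 2$ to build an appropriate splitting.

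The main obstacle I expect is ensuring diagonalizability of the second factor \emph{over} $\mathbb{K}$ itself rather than merely over its algebraic closure. This is genuinely subtle: one must match a chosen spectrum lying in $\mathbb{K}$ with the arithmetic of $P$, which is precisely where the small-field exclusions become obstructions --- over $\mathbb{F}_{2}$ or $\mathbb{F}_{3}$ there simply are not enough scalars to realize the needed constructions, and the invertibility of $2$ is used to symmetrize certain choices. Thus the heart of Botha's argument should lie in a careful, case-based construction exploiting $|\mathbb{K}| \geq 4$ and $\mathrm{char}(\mathbb{K}) \neq 2$, rather than in any single slick identity.
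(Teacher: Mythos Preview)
The paper does not prove this theorem. It is quoted verbatim as Botha's result (\cite{B}, Theorem~2.1) and used as a black box; no argument for it appears anywhere in the text. The only related computation the paper carries out is for the excluded field $\mathbb{F}_{3}$, and there the goal is weaker (a product of two \emph{$X$-formable} matrices, not two diagonalizable ones), achieved via the factorization $C(P)=J\cdot(JC(P))$ with $J$ the antidiagonal permutation.

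Consequently there is nothing in the paper to compare your proposal against. What you have written is in any case a plan rather than a proof: the reduction to companion blocks via the rational canonical form is correct and standard, and your ``alternative route'' $C(P)=J\cdot(JC(P))$ is exactly the identity the paper uses in its $\mathbb{F}_{3}$ discussion. But the decisive step --- forcing the second factor to be diagonalizable over $\mathbb{K}$ itself --- is only named as an obstacle, not overcome. Your first attempt (diagonal $D_{1}$ with distinct entries) does not obviously work, since $D_{1}^{-1}C(P)$ is then a rescaled companion matrix whose characteristic polynomial need not split over $\mathbb{K}$; and for the $J$-route you would still need to show that the Hessenberg matrix $JC(P)$ has square-free minimal polynomial splitting over $\mathbb{K}$, which is false in general (e.g.\ already for $P(X)=X^{n}$ it is nilpotent). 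So as it stands your proposal identifies the right reductions and the right difficulty, but does not supply the construction that resolves it; for that one really does need Botha's argument.
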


Since diagonalizable matrices are $X$-formable, this result covers a lot of cases. Here, we consider the case of $\mathbb{F}_{3}$. By the decomposition theorem of Frobenius, it is sufficient to consider companion matrices. We have the following equality :

\[\begin{pmatrix}
            0 & \cdots & 0 & a_{0} \\
					  1 &  \ddots & \vdots & a_{1} \\
						  & \ddots & 0 & \vdots\\
						  &        & 1 & a_{d-1} \\
							\end{pmatrix}=\begin{pmatrix}
					     &         & 1 \\
						   & \iddots & \\
						 1 &         &  \\
							\end{pmatrix}\begin{pmatrix}
               &         & 1 & a_{d-1} \\
						   & \iddots &  & \vdots \\
						 1 &         & & a_{1} \\
						 0 & \ldots & 0 & a_{0}\\
							\end{pmatrix}.\]
\noindent The first matrix on the right is diagonalizable since its minimal polynomial is $(X-1)(X+1)$ and $1 \neq -1$ in $\mathbb{F}_{3}$. If $a_{0}=\pm 1$ then the second matrix on the right is $X$-formable since its minimal polynomial divides $(X-1)^{2}(X+1)^{2}$ (theorem \ref{MaThm}). If $a_{0}=0$ then the second matrix on the right is $X$-formable since its minimal polynomial divides $X^{2}(X-1)^{2}(X+1)^{2}$ (theorem \ref{MaThm}). Hence, every matrix over $\mathbb{F}_{3}$ is a product of two $X$-formable matrices.					
	
\section{Some topological aspects of the set of $X$-formable matrices}

The aim of this section is to prove theorem \ref{MaThmBis}. Here, we suppose $\mathbb{K}=\mathbb{R}, \mathbb{C}$. The set of polynomials of degree less than $n$ over $\mathbb{K}$ is denoted $\mathbb{K}_{n}[X]$. We use the following norm, if $A=(a_{i,j})_{1 \leq i,j \leq n} \in M_{n}(\mathbb{K})$ we denote $\left\| A\right\|_{\infty}={\rm max}(\left|a_{i,j}\right|,1 \leq i,j \leq n)$ (we recall that all norms are equivalent on $M_{n}(\mathbb{K})$). We start by some preliminary results.

\subsection{Prelimirary lemmas}

In this subsection, we give some results concerning polynomials over $\mathbb{K}$. If $P(X)=\sum_{k=0}^{n} a_{k}X^{k}$, we denote $\left\| P\right\|_{\infty}={\rm max}(\left|a_{k}\right|,~0 \leq k \leq n)$ and $D(z,r)=\{w \in \mathbb{C},~\left|z-w\right|<r\}$. We begin by the following well-known result :

\begin{thm}[Continuity of the roots of a polynomial, \cite{L}]
\label{31}

Let $P$ be a polynomial, $z_{i}$ its distinct roots with $i=1,\ldots,p$, $m_{i}$ the multiplicity of the root $z_{i}$ ($m_{1}+...+m_ {p}=deg(P)$). Then for any $\epsilon >0$ such that $D(z_{i},\epsilon) \cap D(z_{j},\epsilon)=\emptyset$ for any $i \neq j$ there exists $\eta >0$ such that any polynomial $Q$ whose coefficients differ from those of $P$ only by less than $\eta$ has exactly $m_{i}$ roots (distinct or not) in $D(z_{i},\epsilon)$.

\end{thm}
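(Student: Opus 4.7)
The classical tool for a statement like this is \emph{Rouché's theorem}, and my plan is to apply it on the boundary of each of the disjoint disks $D(z_i,\epsilon)$. First, since the disks are pairwise disjoint and $z_1,\dots,z_p$ are the only zeros of $P$ in $\mathbb{C}$, $P$ does not vanish on the union $\Gamma=\bigcup_{i=1}^{p}\partial D(z_i,\epsilon)$. This set is compact, so I would set
\[
\delta \;=\; \min_{z\in\Gamma}\,|P(z)| \;>\; 0.
\]
I would also fix some $R>0$ large enough that every closed disk $\overline{D(z_i,\epsilon)}$ lies in $\overline{D(0,R)}$; this will be the playground on which $|P-Q|$ needs to be controlled.

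The second step is the coefficient-to-value estimate. Writing $P(X)=\sum_{k=0}^{n}a_k X^k$ of degree $n$ and taking any $Q(X)=\sum_{k=0}^{n}b_k X^k$ with $|a_k-b_k|<\eta$ for every $k$, for $|z|\leq R$ one has
\[
|P(z)-Q(z)| \;\leq\; \eta\sum_{k=0}^{n}R^{k} \;\leq\; \eta\,(n+1)\max(1,R^{n}).
\]
I would then choose $\eta$ small enough so that this upper bound is strictly less than $\delta$, and also small enough that $\eta<|a_n|$, which forces $b_n\neq 0$ and hence $\deg Q=n$. On each circle $\partial D(z_i,\epsilon)\subset\Gamma$ this gives $|Q-P|<|P|$, and Rouché's theorem applied with $f=P$ and $g=Q-P$ then yields that $P$ and $P+g=Q$ have the same number of zeros in $D(z_i,\epsilon)$, counted with multiplicity, namely $m_i$.

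The one point that requires a small additional check is that no zero of $Q$ ``escapes'' from the union of the disks: Rouché locally only gives a lower bound of $m_i$ zeros per disk. I would close this by a simple counting argument: summing over $i$ gives at least $m_1+\cdots+m_p=\deg(P)=n$ zeros of $Q$ in $\bigcup_i D(z_i,\epsilon)$, and since $\deg Q=n$ (guaranteed by the condition $\eta<|a_n|$), the fundamental theorem of algebra forces equality in each disk. This is the main conceptual obstacle: without the degree-preservation coming from $\eta<|a_n|$, a zero of $P$ could appear to ``go off to infinity'' when perturbing $P$ to $Q$, and one would only obtain the lower bound $m_i$ in each disk rather than equality. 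With that condition in hand, the argument closes cleanly and the statement of the theorem follows.
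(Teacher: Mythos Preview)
The paper does not prove this theorem; it is quoted from \cite{L} as a known result and used as a black box in the proofs of Propositions~\ref{32} and~\ref{33}. So there is no ``paper's own proof'' to compare against. Your Rouch\'e-based argument is the standard proof of this statement and is correct in substance.

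One small correction: you write that ``Rouch\'e locally only gives a lower bound of $m_i$ zeros per disk''. That is not right. Rouch\'e's theorem gives \emph{exactly} the same number of zeros (with multiplicity) for $P$ and $Q$ inside each contour $\partial D(z_i,\epsilon)$, namely $m_i$. So once you have $|Q-P|<|P|$ on each circle, you already have the full conclusion of the theorem, and your final counting argument (via $\eta<|a_n|$ and the fundamental theorem of algebra) is redundant. It is not wrong, but it is not doing any work. In fact the argument flows the other way: from Rouch\'e you get $\sum_i m_i=n$ zeros of $Q$ in the union of the disks, which on its own forces $\deg Q\geq n$, hence $\deg Q=n$ whenever $Q\in\mathbb{C}_n[X]$, without needing the separate condition $\eta<|a_n|$.
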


\noindent Now we can prove the following result :

\begin{prop}
\label{32}

$n \in \mathbb{N}^{*}$. The set $\hat{F}$ of polynomials over $\mathbb{C}$ of degree $n$ whose roots are simple or double is an open subset of $\mathbb{C}_{n}[X]$.

\end{prop}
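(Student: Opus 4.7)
The plan is to reduce openness to the continuity-of-roots theorem (Theorem \ref{31}). Fix $P \in \hat{F}$ and write its distinct roots as $z_1,\ldots,z_p \in \mathbb{C}$ with multiplicities $m_1,\ldots,m_p$, so that each $m_i \in \{1,2\}$ and $m_1+\cdots+m_p = n$. I want to build an open $\|\cdot\|_\infty$-ball around $P$ consisting entirely of polynomials whose roots are again simple or double.

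First I would choose $\varepsilon > 0$ small enough that the disks $D(z_i,\varepsilon)$ are pairwise disjoint (take $\varepsilon$ to be any positive number strictly less than $\tfrac{1}{2}\min_{i\neq j}|z_i-z_j|$ when $p \geq 2$; take $\varepsilon$ arbitrary when $p=1$). Applying Theorem \ref{31} with this $\varepsilon$ produces an $\eta_1 > 0$ such that every polynomial $Q$ with $\|Q-P\|_\infty < \eta_1$ has exactly $m_i$ roots, counted with multiplicity, in each disk $D(z_i,\varepsilon)$. Writing $P(X) = \sum_{k=0}^{n} a_k X^k$ with $a_n \neq 0$, I would also require $\eta_2 := |a_n|$, so that any $Q$ with $\|Q-P\|_\infty < \eta := \min(\eta_1,\eta_2)$ has a nonzero coefficient of $X^n$ and is therefore of degree exactly $n$.

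Such a $Q$ thus has $n$ roots in $\mathbb{C}$ counted with multiplicity, and since the disks are pairwise disjoint and together contain $\sum_i m_i = n$ of them, \emph{all} roots of $Q$ lie in the union $\bigsqcup_i D(z_i,\varepsilon)$, with exactly $m_i$ in the $i$-th disk. Because $m_i \leq 2$, the roots of $Q$ inside $D(z_i,\varepsilon)$ form either a single simple root (if $m_i=1$) or two simple roots or one double root (if $m_i=2$); in every case the multiplicity of each root of $Q$ is at most $2$. Hence $Q \in \hat{F}$, and the open ball of radius $\eta$ around $P$ is contained in $\hat{F}$, proving openness.

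I do not expect any serious obstacle: the only mild subtlety is making sure that a small perturbation does not drop the degree (handled by the extra condition $\eta \leq |a_n|$) and that the cumulative multiplicity $m_i \leq 2$ in each disk genuinely forces the roots there to remain simple or double, which is immediate from the combinatorics of partitions of $2$.
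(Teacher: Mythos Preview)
Your proof is correct and follows essentially the same route as the paper: pick disjoint disks around the roots, invoke the continuity-of-roots theorem, and observe that at most two roots in each disk forces every root of the perturbed polynomial to have multiplicity at most $2$. The only cosmetic difference is in how the degree is controlled: you shrink $\eta$ below $|a_n|$ to keep the leading coefficient nonzero, whereas the paper argues that a polynomial in $\mathbb{C}_n[X]$ with at least $n$ roots (counted with multiplicity) must have degree exactly $n$.
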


\begin{proof}

Let $P \in \mathbb{C}_{n}[X]$ with ${\rm deg}(P)=n$. We assume that P has only simple and double roots. We denote $z_{i}$ its distinct roots with $i=1,\ldots,p$ and $m_{i}$ the multiplicity of the root $z_{i}$. We have $m_{i} \leq 2$ and $m_{1}+...+m_ {p}=n$. 
\\
\\Since $\mathbb{C}$ is a separated nomed space, it exists $\epsilon >0$ such that $D(z_{i},\epsilon) \cap D(z_{j},\epsilon)=\emptyset$ for any $i \neq j$. Hence, by the previous theorem it exists $\eta >0$ such that any polynomial $Q$ verifying $\left\| P-Q\right\|_{\infty} < \eta$ has exactly $m_{i}$ roots (distinct or not) in $D(z_{i},\epsilon)$. So, in each $D(z_{i},\epsilon)$, $Q$ has exactly one root of multiplicity less than two or two roots of multiplicity one. Besides, $Q$ has $n$ roots (with multiplicity) in $\bigcup_{i=1}^{p} D(z_{i},\epsilon)$ and ${\rm deg}(Q) \geq n$.
\\
\\ Hence, a polynomial $T$ belonging to $\mathbb{C}_{n}[X]$ and verifying $\left\| P-T\right\|_{\infty} < \eta$ has degree $n$ and all its roots are simple or double. Thus, $\hat{F}$ is an open subset of $\mathbb{C}_{n}[X]$.

\end{proof}

This result is no longer true if we replace $\mathbb{C}$ by $\mathbb{R}$. For instance, we can consider the polynomial sequence $P_{n}=X^{2}+\frac{1}{n}X+\frac{1}{n}$. For all $n \in \mathbb{N}^{*}$, $P_{n}$ is irreducible, since its discriminant is $\frac{1}{n}(\frac{1}{n}-4)<0$. However, this sequence converges to $X^{2}$. Hence, the complementary of the set of polynomials over $\mathbb{R}$ of degree $2$ whose roots are simple or double is not closed. So, this set is not an open set. However, we can still have a similar result :

\begin{prop}
\label{33}

$n \in \mathbb{N}^{*}$. The set $\hat{G}$ of polynomials over $\mathbb{R}$ of degree $n$ which verify $(\mathcal{P})$ is an open subset of $\mathbb{R}_{n}[X]$.

\end{prop}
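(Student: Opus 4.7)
My plan is to imitate the proof of Proposition~\ref{32}, adapting it to the real setting by exploiting the fact that a real polynomial has non-real roots occurring in conjugate pairs. Let $P \in \hat{G}$ have degree $n$ and, viewing $P$ as a polynomial in $\mathbb{C}[X]$, list its distinct complex roots $z_1,\ldots,z_p$ with multiplicities $m_i$. Because $P$ verifies $(\mathcal{P})$, the real roots among the $z_i$ have multiplicity $1$ or $2$, while the non-real $z_i$ (which come in conjugate pairs) all have multiplicity $1$. I will choose $\epsilon > 0$ small enough that the disks $D(z_i,\epsilon)$ are pairwise disjoint and, in addition, $\epsilon < |\operatorname{Im}(z_i)|$ for every non-real $z_i$, so that such disks avoid the real axis.

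By Theorem~\ref{31}, there exists $\eta > 0$ such that every polynomial $Q$ with $\|P - Q\|_\infty < \eta$ has exactly $m_i$ roots (counted with multiplicity) in each $D(z_i,\epsilon)$; shrinking $\eta$ if necessary also forces $\deg Q = n$ by continuity of the leading coefficient. I will then take an arbitrary $T \in \mathbb{R}_n[X]$ with $\|P - T\|_\infty < \eta$ and verify, disk by disk, that its roots give a factorization of the type required by $(\mathcal{P})$.

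The analysis splits into three cases. When $z_i$ is non-real with $m_i = 1$, the unique root of $T$ in $D(z_i,\epsilon)$ must be non-real (the disk misses $\mathbb{R}$), and its conjugate is the unique root of $T$ sitting in the disjoint disk $D(\bar z_i,\epsilon)$; together they produce a simple irreducible quadratic factor of $T$. When $z_i$ is real with $m_i = 1$, a non-real root $w \in D(z_i,\epsilon)$ would force $\bar w \in D(\bar z_i,\epsilon) = D(z_i,\epsilon)$, creating a second root and contradicting the count, so the root is real and simple. When $z_i$ is real with $m_i = 2$, the two roots of $T$ (with multiplicity) in the disk are either both real (giving two distinct simple linear factors or a double linear factor) or form a single conjugate pair of non-real roots (giving one simple irreducible quadratic factor). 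The pairwise disjointness of the disks then guarantees that the real roots thus obtained are pairwise distinct and the quadratic factors thus obtained are pairwise distinct; since the multiplicities sum to $\sum_i m_i = n = \deg T$, these factors exhaust $T$, so $T$ verifies $(\mathcal{P})$.

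The main obstacle, and the point where the argument genuinely departs from the proof of Proposition~\ref{32}, is the third case above: a disk centered at a real double root of $P$ may capture a pair of complex-conjugate simple roots of $T$, and I need to check that this outcome remains compatible with $(\mathcal{P})$. It is, since such a pair contributes exactly one irreducible quadratic factor of multiplicity~$1$, which is precisely what $(\mathcal{P})$ allows.
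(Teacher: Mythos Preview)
Your proposal is correct and follows essentially the same approach as the paper: both use Theorem~\ref{31} to localise the roots of a nearby real polynomial in small disks, then perform a case analysis to check that each disk contributes only linear factors of multiplicity at most~$2$ or simple irreducible quadratic factors. The differences are purely organisational: you structure the case analysis by the type of the centre $z_i$ (non-real simple, real simple, real double) and impose the extra harmless condition $\epsilon < |\operatorname{Im}(z_i)|$ for non-real $z_i$, whereas the paper structures it by the configuration of roots that appears in a given disk and includes an explicit ``impossible'' mixed case that you handle implicitly via the conjugate-symmetry argument.
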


\begin{proof}

Let $P \in \mathbb{R}_{n}[X]$ with ${\rm deg}(P)=n$. We assume that $P$ verifies $(\mathcal{P})$. We denote $z_{i}$ its distinct roots in $\mathbb{C}$ with $i=1,\ldots,p$ and $m_{i}$ the multiplicity of the root $z_{i}$. Since $P$ verify $(\mathcal{P})$, we have $m_{i} \leq 2$ and $m_{1}+...+m_ {p}=n$. Besides, if $z_{i} \in \mathbb{C}-\mathbb{R}$ then $m_{i}=1$ and $\overline{z_{i}}$ is also a root of $P$ of multiplicity 1. 
\\
\\Since $\mathbb{C}$ is a separated nomed space, it exists $\epsilon >0$ such that $D(z_{i},\epsilon) \cap D(z_{j},\epsilon)=\emptyset$ for any $i \neq j$. Hence, by the previous theorem it exists $\eta >0$ such that any polynomial $Q$ in $\mathbb{R}[X] \subset \mathbb{C}[X]$ verifying $\left\| P-Q\right\|_{\infty} < \eta$ has exactly $m_{i}$ roots (distinct or not) in $D(z_{i},\epsilon)$. In particular, $Q$ has $n$ roots in $\mathbb{C}$ (with multiplicity) belonging to $\bigcup_{i=1}^{p} D(z_{i},\epsilon)$ and ${\rm deg}(Q) \geq n$. We have several possible cases for the roots of $Q$ belonging to $D(z_{i},\epsilon)$ :
\begin{itemize}
\item $D(z_{i},\epsilon)$ contains exactly one real root of multiplicity less than two or two real roots of multiplicity one.
\\
\item $D(z_{i},\epsilon)$ contains exactly one root $\lambda$ of multiplicity one belonging to $\mathbb{C}-\mathbb{R}$. Then, $D(\overline{z_{i}},\epsilon)$ contains exactly one root $\overline{\lambda}$ of multiplicity one. Thus, the polynomial $(X-\lambda)(X-\overline{\lambda})$ is an irreducible real factor of $Q$ and this factor appears only one time in $Q$.
\\
\item $D(z_{i},\epsilon)$ contains exactly two roots of multiplicity one, $\lambda$ and $\mu$, belonging to $\mathbb{C}-\mathbb{R}$. In this case, $z_{i} \in \mathbb{R}$ (since the non-real roots of $P$ has multiplicity one). Besides, $\overline{\lambda}$ and $\overline{\mu}$ are roots of $Q$ of multiplicity one and they also belong to $D(z_{i},\epsilon)$ (since $z_{i} \in \mathbb{R}$). Hence, $\mu=\overline{\lambda}$. Thus, the polynomial $(X-\lambda)(X-\overline{\lambda})$ is an irreducible real factors of $Q$ and this factor appears only one time in $Q$.
\\
\item $D(z_{i},\epsilon)$ contains exactly one real root of multiplicity one $x$ and one root of multiplicity one $\lambda$ belonging to $\mathbb{C}-\mathbb{R}$. In this case, $z_{i} \in \mathbb{R}$ (since the non-real roots of $P$ has multiplicity one). Besides, $\overline{\lambda}$ is a root of $Q$ of multiplicity one which belongs to $D(z_{i},\epsilon)$ (since $z_{i} \in \mathbb{R}$). Hence, this case is not possible.
\\
\end{itemize}

\noindent Hence, a polynomial $T$ belonging to $\mathbb{R}_{n}[X]$ and verifying $\left\| P-T\right\|_{\infty} < \eta$  has degree $n$ and verifies $(\mathcal{P})$.  Thus, $\hat{G}$ is an open subset of $\mathbb{R}_{n}[X]$.

\end{proof}

\begin{rem}

{\rm In the first case considered in the proof, $z_{i} \in \mathbb{R}$. Indeed, suppose $z_{i}$ is a non-real root of $P$. $D(z_{i},\epsilon)$ contains only one root of multiplicity one (since the non-real roots of $P$ has multiplicity one). $\overline{z_{i}}$ is a root of $P$ of multiplicity one. Hence, $D(\overline{z_{i}},\epsilon)$ contains exactly one root of $Q$. We set $x$ this root. $x$ is necessarily real since otherwise $D(z_{i},\epsilon)$ would contain a non-real root which would be $\overline{x}$. Besides, $\left|\overline{z_{i}}-x\right|=\left|z_{i}-x\right| \leq \epsilon$. Thus, $x \in D(z_{i},\epsilon) \cap D(\overline{z_{i}},\epsilon)$. This is not possible since $D(z_{i},\epsilon) \cap D(\overline{z_{i}},\epsilon)= \emptyset$. Thus, $z_{i} \in \mathbb{R}$.
}

\end{rem}

\subsection{Proof of theorem \ref{MaThmBis}}

\begin{proof}

We consider the following continued map $\begin{array}{ccccc} 
\varphi & : & M_{n}(\mathbb{K}) & \longrightarrow & \mathbb{K}_{n}[X] \\
 & & A & \longmapsto & \chi_{A}  \\
\end{array}$.
\\
\\ i) The set of diagonalizable matrices $D_{n}(\mathbb{C})$ is included in $F$. Moreover, $D_{n}(\mathbb{C})$ is dense in $M_{n}(\mathbb{C})$. Hence, $F$ is dense in $M_{n}(\mathbb{C})$.
\\
\\ii) $\hat{F}$ is an open subset of $\mathbb{C}_{n}[X]$ (proposition \ref{32}). Hence, \[\varphi^{-1}(\hat{F})=\{A \in M_{n}(\mathbb{C}),~\chi_{A}~{\rm verifies}~(\mathcal{P})\}=\{A \in M_{n}(\mathbb{C}),~\chi_{A}~{\rm has~only~simple~or~double~roots}\}\] is an open subset of $M_{n}(\mathbb{C})$. Besides, $\varphi^{-1}(\hat{F})$ is included in $F$ (by the theorem \ref{MaThm} and the theorem of Cayley-Hamilton). Hence, $\varphi^{-1}(\hat{F})$ is included in the interior of $F$.
\\
\\Let $A \in F$ such that $\chi_{A}$ doesn't verify $(\mathcal{P})$. By corollary \ref{22}, all the Jordan blocks appearing in the Jordan normal form of $A$ have their size equal to 1 or 2. Since, $\chi_{A}$ doesn't verify $(\mathcal{P})$, it exists $\lambda$ such that $(X-\lambda)^{3}$ divides $\chi_{A}$. Thus, one of the following occurs :
\begin{itemize}
\item a) the Jordan normal form of $A$ contains two blocks $J_{\lambda}=\begin{pmatrix}
    \lambda & 1  \\
    0 & \lambda  \\
   \end{pmatrix}$;
\item b) the Jordan normal form of $A$ contains three blocks $(\lambda)$;
\item c) the Jordan normal form of $A$ contains one block $J_{\lambda}=\begin{pmatrix}
    \lambda & 1  \\
    0 & \lambda  \\
   \end{pmatrix}$ and one block $(\lambda)$.
\end{itemize}

\noindent We consider each case separately :
\begin{itemize}
\item If a) occurs. It exists $P \in GL_{n}(\mathbb{C})$ such that $A=P\begin{pmatrix}
    J_{\lambda} & &  \\
     & J_{\lambda} & \\
		  & & B	
			\\
   \end{pmatrix}P^{-1}$. For all $n \in \mathbb{N}^{*}$, we define $A_{n}=P\begin{pmatrix}
    \lambda & 1 & \frac{1}{n} & 0 & \\
    0 & \lambda & 0 & \frac{1}{n} & \\
		0 & 0 & \lambda & 1 & \\
		0 & 0 & 0 & \lambda & \\
		  &  &  & & B	\\
   \end{pmatrix}P^{-1}$. $(A_{n})$ converges to $A$. However, for all $n \in \mathbb{N}^{*}$, $(X-\lambda)^{3}$ divides $\pi_{A_{n}}$. Hence, $A_{n} \notin F$ (theorem \ref{MaThm}). Thus, $A$ doesn't belong to the interior of $F$.
\\
\item If b) occurs. It exists $P \in GL_{n}(\mathbb{C})$ such that $A=P\begin{pmatrix}
     \lambda & & & \\
     & \lambda & & \\
		 & & \lambda & \\
		  & & & B	
			\\
   \end{pmatrix}P^{-1}$. For all $n \in \mathbb{N}^{*}$, we define $A_{n}=P\begin{pmatrix}
     \lambda & \frac{1}{n} & 0 & \\
     0 & \lambda & \frac{1}{n} & \\
		 0 & 0 & \lambda & \\
		  & & & B	
			\\
   \end{pmatrix}P^{-1}$. $(A_{n})$ converges to $A$. However, for all $n \in \mathbb{N}^{*}$, $(X-\lambda)^{3}$ divides $\pi_{A_{n}}$. Hence, $A_{n} \notin F$ (theorem \ref{MaThm}). Thus, $A$ doesn't belong to the interior of $F$.
\\
\item If c) occurs. It exists $P \in GL_{n}(\mathbb{C})$ such that $A=P\begin{pmatrix}
    J_{\lambda} & &  \\
     & \lambda & \\
		  & & B	
			\\
   \end{pmatrix}P^{-1}$. For all $n \in \mathbb{N}^{*}$, we define $A_{n}=P\begin{pmatrix}
    \lambda & 1 &  0 & \\
    0 & \lambda &  \frac{1}{n} & \\
		0 & 0 & \lambda & & \\
		  &  &  & & B	\\
   \end{pmatrix}P^{-1}$. $(A_{n})$ converges to $A$. However, for all $n \in \mathbb{N}^{*}$, $(X-\lambda)^{3}$ divides $\pi_{A_{n}}$. Hence, $A_{n} \notin F$ (theorem \ref{MaThm}). Thus, $A$ doesn't belong to the interior of $F$.
\\
\end{itemize}

\noindent Hence, the interior of $F$ is equal to $\varphi^{-1}(\hat{F})$.
\\
\\iii) Let $A \in M_{n}(\mathbb{R})$. If $A$ is triangularizable then it exists a sequence $(A_{n})$ of diagonalizable matrices such that $(A_{n})$ converges to $A$. 
\\
\\Suppose now $A$ is not triangularizable. Thus, $\pi_{A}=\prod_{i=1}^{r} P_{i}^{n_{i}}\prod_{i=1}^{l} (X-\lambda_{i})^{m_{i}}$ with $r \geq 1$, $l \geq 0$, $\lambda_{i} \in \mathbb{R}$, $\lambda_{i} \neq \lambda_{j}$ for $i \neq j$, $n_{i}, m_{i} \geq 1$ and $P_{i}$ irreducible monic polynomials of degree 2, $P_{i} \neq P_{j}$ for $i\neq j$,. Therefore, $A$ is similar to a block-diagonal matrix in which each block has its minimal polynomial equals to $P_{i}^{n_{i}}$ or $(X-\lambda_{i})^{m_{i}}$ (by the kernel lemma). Hence, it is sufficient to consider square matrices whose minimal polynomial is the power of an irreducible monic polynomial of degree 2 (since the case of triangularizable matrices has already been considered). 
\\
\\Let $B \in M_{n}(\mathbb{R})$ such that $\pi_{B}=P^{m}$ with $P$ an irreducible monic polynomial of degree 2 and $m \leq \frac{n}{2}$. It exists $\lambda \in \mathbb{C}-\mathbb{R}$ such that $P(X)=(X-\lambda)(X-\overline{\lambda})$. By the decomposition theorem of Frobenius, $B$ is similar to a block diagonal matrix \[H=\begin{pmatrix}
   C(R_{1}) &        &   \\
         & \ddots &    \\
		     &        & C(R_{t})  \\
\end{pmatrix},\] with $R_{i}$ real polynomials verifying $R_{i}$ divides $R_{i+1}$. Since $\pi_{B}=\pi_{H}={\rm lcm}(R_{i}, 1 \leq i \leq t)$ and $\pi_{B}=P^{m}$ with $P$ irreducible, $R_{i}$ is a power of $P$. Thus, it exists $1 \leq r_{i} \leq m$ such that $R_{i}=P^{r_{i}}=(X-\lambda)^{r_{i}}(X-\overline{\lambda})^{r_{i}}$. We set $R_{i,n}=\prod_{j=1}^{r_{i}} (X-\lambda+\frac{j}{n^{2}})(X-\overline{\lambda}+\frac{j}{n^{2}})$. $(R_{i,n})$ converges to $R_{i}$. Hence, $C(R_{i,n})$ converges to $C(R_{i})$. Besides, $R_{i,n}$ verifies $(\mathcal{P})$ for all $n \in \mathbb{N}^{*}$ and $\pi_{C(R_{i,n})}=R_{i,n}$. Thus, $C(R_{i,n})$ is $X$-formable (theorem \ref{MaThm}). 
\\
\\So, it exists a sequence of $X$-formable matrices which converges to $B$.
\\
\\Hence, $G$ is dense in $M_{n}(\mathbb{R})$.
\\
\\iv) $\hat{G}$ is an open subset of $\mathbb{R}_{n}[X]$ (proposition \ref{33}). Hence, $\varphi^{-1}(\hat{G})=\{A \in M_{n}(\mathbb{R}),~\chi_{A}~{\rm verifies}~(\mathcal{P})\}$ is an open subset of $M_{n}(\mathbb{R})$. Besides, $\varphi^{-1}(\hat{G})$ is included in $G$ (by the theorem \ref{MaThm} and the theorem of Cayley-Hamilton). Hence, $\varphi^{-1}(\hat{G})$ is included in the interior of $G$.
\\
\\Let $A \in G$ such that $\chi_{A}$ doesn't verify $(\mathcal{P})$. $A$ is similar to a block-diagonal matrix in which each block has a size less than 2 (lemme \ref{21}). Besides, each triangularizable block has a Jordan normal form. Hence, $A$ is similar to a block-diagonal matrix $C$ in which each block is a Jordan block of size 1 or 2 or a square matrix of size 2 whose charateristic polynomial is an irreducible monic polynomial of degree 2. Since, $\chi_{A}$ doesn't verify $(\mathcal{P})$, it exists $\lambda$ such that $(X-\lambda)^{3}$ divides $\chi_{A}$ or it exists an irreducible monic polynomial $P$ of degree 2 such that $P^{2}$ divides $\chi_{A}$. Thus, one of the following occurs :
\begin{itemize}
\item a) $C$ contains two blocks $J_{\lambda}=\begin{pmatrix}
    \lambda & 1  \\
    0 & \lambda  \\
   \end{pmatrix}$;
\item b) $C$ contains three blocks $(\lambda)$;
\item c) $C$ contains one block $J_{\lambda}=\begin{pmatrix}
    \lambda & 1  \\
    0 & \lambda  \\
   \end{pmatrix}$ and one block $(\lambda)$;
\item d) $C$ contains two blocks $T=\begin{pmatrix}
    x & y  \\
    z & t  \\
   \end{pmatrix}$ and $O=\begin{pmatrix}
    u & v  \\
    w & h  \\
   \end{pmatrix}$ which have the same irreducible characteristic polynomial $S(X)=X^{2}+\alpha X+\beta$. In particular, $y, z, v, w \neq 0$.
	\\
\end{itemize}

\noindent We can study the first three cases in the same way as that used previously. 
\\
\\If d) occurs. It exists $P \in GL_{n}(\mathbb{R})$ such that $A=P\begin{pmatrix}
    T & &  \\
     & O & \\
		  & & B	
			\\
   \end{pmatrix}P^{-1}$. For all $n \in \mathbb{N}^{*}$, we define $A_{n}=P\begin{pmatrix}
    x & y & 0 & \frac{1}{n} & \\
    z & t & 0 & 0 & \\
		 &  & u & v & \\
		&  & w & h & \\
		  &  &  & & B	\\
   \end{pmatrix}P^{-1}$. The minimal poynomial of $Z_{n}=\begin{pmatrix}
    x & y & 0 & \frac{1}{n}  \\
    z & t & 0 & 0  \\
		0 & 0 & u & v  \\
		0 & 0 & w & h  \\
		\end{pmatrix}$ is equal to the minimal polynomial of $A_{n}$. Besides, $S$ divides the minimal polynomial of $Z_{n}$. $S(Z_{n})=\begin{pmatrix}
    0 & 0 & \frac{w}{n} & \frac{x+h+\alpha}{n}  \\
    0 & 0 & 0 & \frac{z}{n}  \\
		0 & 0 & 0 & 0  \\
		0 & 0 & 0 & 0  \\
		\end{pmatrix} \neq 0_{4}$ (since $w \neq 0$) and $S^{2}(Z_{n})=0_{4}$. Hence, $\pi_{Z_{n}} \neq S$ and $\pi_{Z_{n}}$ divides $S^{2}$. Thus, $\pi_{Z_{n}}=S^{2}$, since $S$ is irreducible. Hence, $\pi_{A_{n}}=S^{2}$. So, $\pi_{A_{n}}$ doesn't verify $(\mathcal{P})$ and $A_{n} \notin G$ (theorem \ref{MaThm}). However, $(A_{n})$ converges to $A$. Thus, $A$ doesn't belong to the interior of $G$.
\\
\\ Hence, the interior of $G$ is equal to $\varphi^{-1}(\hat{G})$.

\end{proof}

We have therefore, as announced, a dense subset of $M_{n}(\mathbb{R})$ and $M_{n}(\mathbb{C})$ and a complete characterization of the interior and of this set. We conclude by noticing a curious similarity between the diagonalizable matrices and the $X$-formable matrices. For this, we will use the following notation : a polynomial verifies the property $(\mathcal{Q})$ if it is a product of distinct linear factors over $\mathbb{K}$. Let $A \in M_{n}(\mathbb{K})$ with $\mathbb{K}=\mathbb{R}, \mathbb{C}$, we have :

\begin{itemize}
\item $A$ is diagonalizable if and only if $\pi_{A}$ verifies $(\mathcal{Q})$; $A$ belongs to the interior of the set of diagonalizable matrices if and only if $\chi_{A}$ verifies $(\mathcal{Q})$;
\item $A$ is $X$-formable if and only if $\pi_{A}$ verifies $(\mathcal{P})$; $A$ belongs to the interior of the set of $X$-formable matrices if and only if $\chi_{A}$ verifies $(\mathcal{P})$.
\end{itemize}

\section{Some open problems}

\indent We collect here some open problems related to $X$-formable matrices. The firt problem is related to theorem \ref{MaThmBis}. Indeed, we know some topological properties of the set of $X$-formable matrices. Therefore, it is natural to want to look for other. 

\begin{pro}
\label{41}

Find other topological properties of the set of $X$-formable matrices over $\mathbb{R}$ and $\mathbb{C}$.

\end{pro}

We have a certain number of results concerning the maximum dimension of a vector subspace all of whose elements verify a given property (such as Gerstenhaber's theorem, see \cite{Ge}). This naturally leads to the following problem :

\begin{pro}
\label{42}

What is the maximal dimension of a vector subspace of $M_{n}(\mathbb{K})$ containing only $X$-formable elements ?

\end{pro}

\noindent In the case of diagonalizable matrices we have the following result :

\begin{thm}[Klar\`{e}s's criterion, \cite{MM} p 125]
\label{43}

Let $B \in M_{n}(\mathbb{K})$ with $\mathbb{K}$ an algebraically closed field. We set $Ad_{B} : M \in M_{n}(\mathbb{K}) \longmapsto BM-MB$. $B$ is diagonalizable if and only if $Ker(Ad_{B})=Ker(Ad_{B}^{2})$.

\end{thm}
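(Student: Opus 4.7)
My plan is to prove each direction of the equivalence separately. For $(\Rightarrow)$ I would exhibit an explicit eigenbasis of $M_{n}(\mathbb{K})$ for $Ad_{B}$; for $(\Leftarrow)$ I would argue by contraposition, reducing to a single $2\times 2$ Jordan block where the failure can be witnessed by one elementary matrix.

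For the forward direction, suppose $B$ is diagonalizable and pick a basis $(e_{1},\ldots,e_{n})$ with $Be_{i}=\lambda_{i}e_{i}$. Let $E_{ij}$ denote the elementary matrix in this basis sending $e_{j}$ to $e_{i}$ and the other $e_{k}$ to zero; these form a basis of $M_{n}(\mathbb{K})$. A one-line computation gives $Ad_{B}(E_{ij})=(\lambda_{i}-\lambda_{j})E_{ij}$, so $Ad_{B}$ is diagonalizable on $M_{n}(\mathbb{K})$. Since the kernel of a diagonalizable endomorphism coincides with the kernel of every positive power of it, $Ker(Ad_{B})=Ker(Ad_{B}^{2})$ follows immediately.

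For the reverse direction, I would proceed by contraposition. Both diagonalizability of $B$ and the equality $Ker(Ad_{B})=Ker(Ad_{B}^{2})$ are invariant under conjugation (for the second, note that $Ad_{PBP^{-1}}(M)=P\,Ad_{B}(P^{-1}MP)P^{-1}$), so since $\mathbb{K}$ is algebraically closed I may assume $B$ is already in Jordan normal form. If $B$ is not diagonalizable, a Jordan block of size $\geq 2$ appears; placing it at the top left, $B=\bigl(\begin{smallmatrix}\lambda & 1\\ 0 & \lambda\end{smallmatrix}\bigr)\oplus B'$. Taking $M=E_{22}$ (whose only nonzero entry is $m_{22}=1$), a direct block computation gives $Ad_{B}(M)=E_{12}\neq 0$ and $Ad_{B}(E_{12})=0$, so $M\in Ker(Ad_{B}^{2})\setminus Ker(Ad_{B})$, which is the desired contradiction to equality of the two kernels.

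The main obstacle is really just choosing the right witness $M$ in the reverse direction. The instinctive first guess -- the nilpotent part $N$ of the Jordan--Chevalley decomposition of $B$ -- is useless, because $N$ commutes with $B$ by construction, so $Ad_{B}(N)=0$ and $N$ sits inside \emph{both} kernels. The correct idea is to pick a matrix supported on a single $2\times 2$ Jordan block, placed so that $[B,M]$ is forced to land as a nonzero element of $Ker(Ad_{B})$; the elementary matrix $E_{22}$ inside such a block is the cleanest choice, and with it the entire reverse implication collapses to the $2\times 2$ commutator computation above.
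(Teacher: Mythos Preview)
The paper does not prove this theorem; it merely quotes it from \cite{MM}. So there is nothing to compare against, and I will just assess your argument on its own.

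Your forward direction is correct and clean: the $E_{ij}$ in an eigenbasis of $B$ are eigenvectors of $Ad_{B}$, so $Ad_{B}$ is diagonalizable and its kernel equals that of its square.

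The reverse direction has a genuine gap. You write that if $B$ is not diagonalizable one may place a Jordan block of size $\geq 2$ at the top left and obtain $B=\bigl(\begin{smallmatrix}\lambda & 1\\ 0 & \lambda\end{smallmatrix}\bigr)\oplus B'$. This direct-sum decomposition is only available when some Jordan block has size \emph{exactly} $2$. If, say, $B=J_{3}(\lambda)$ is a single $3\times 3$ Jordan block, then by uniqueness of the Jordan form $B$ is not similar to anything of the shape $J_{2}(\lambda)\oplus B'$, and your ``direct block computation'' breaks down. Concretely, for $B=J_{3}(\lambda)$ one has
\[
Ad_{B}(E_{22})=E_{12}-E_{23},\qquad Ad_{B}^{2}(E_{22})=-2E_{13}\neq 0,
\]
so $E_{22}$ is not even in $Ker(Ad_{B}^{2})$ and cannot serve as a witness.

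The fix is minimal and keeps your strategy intact. Put $B$ in Jordan form with a block of size $k\geq 2$ occupying rows and columns $1,\ldots,k$, and take $M=E_{2,k}$ instead of $E_{22}$. Because column $1$ of $B$ is $\lambda e_{1}$ (top of the chain) and row $k$ of $B$ is $\lambda e_{k}^{T}$ (bottom of the chain), one gets
\[
Ad_{B}(E_{2,k})=E_{1,k}\neq 0,\qquad Ad_{B}(E_{1,k})=0,
\]
so $E_{2,k}\in Ker(Ad_{B}^{2})\setminus Ker(Ad_{B})$ for every $k\geq 2$. (For $k=2$ this is exactly your $E_{22}$.) With this correction your contrapositive argument goes through.
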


\noindent This leads to the formulation of the problem below :

\begin{pro}
\label{44}

Can we find a ``similar" criterion for $X$-formable matrices ?

\end{pro}


\begin{thebibliography}{99}

\bibitem{B}
J. D. Botha,
{\it Products of diagonalizable matrices},
Linear algebra and its applications, Vol. 273 1-3, (1998), pp 65-82.

\bibitem{Ge}
M. Gerstenhaber,
{\it On nilalgebras and linear varieties of nilpotent matrices (I)}, 
Amer. J. Math., Vol. 80, (1958), pp 614-622.

\bibitem{G}
X. Gourdon, 
{\it Les maths en $t\hat{e}te$ : Alg\`ebre}, 
$2^{{\rm \grave{e}me}}$ \'edition, Ellipses, 2009.

\bibitem{L}
D. Leborgne,
{\it Calcul diff\'{e}rentiel complexe},
Presses universitaires de France, 1996.

\bibitem{MM}
R. Mansuy, R. Mneimn\'{e},
{\it Alg\`{e}bre lin\'{e}aire: R\'{e}duction des endomorphismes},
De Boeck Superieur, 2016.

\bibitem{ANO}
A. M. Nazari, E. Afshari, B.A. Omidi,
{\it Properties of Central Symmetric $X$-Form Matrices}, 
Iranian Journal of Mathematical Sciences and Informatics, Vol. 6, No. 2 (2011), pp 9-20.

\bibitem{S}
A. Sadeghi, M.S. Solary,
{\it The exponential functions of central-symmetric $X$-form matrices}, 
Journal of Mathematical Modeling, Volume 4, Issue 1 (2016), pp 19-34.

\bibitem{Sa}
A. Sadeghi,
{\it On the Function of Block Anti Diagonal Matrices and Its Applications},
International Journal of Mathematical Modelling $\&$ Computations, Vol. 06, No. 02 (2016), pp 105- 117.


\end{thebibliography}
\end{document}